\title{A Localization in MV-algebras}
\author{Colin G. Bailey}
\address{School of Mathematics, Statistics \& Operations Research\\
Victoria University of Wellington\\
Wellington, New Zealand\\
}
\email{Colin.Bailey@vuw.ac.nz}
\date{\today}
\def\leftGen{{[\kern-1.5pt[}}
\def\rightGen{{]\kern-1.5pt]}}
\let\rsf\mathscr
\providecommand{\meet}{\mathbin{\wedge}}
\providecommand{\join}{\mathbin{\vee}}
\newcommand{\comp}[1]{\overline{#1}}
     \def\restrict{\hbox{\rm\kern0.166em\accent"12\kern-0.536em$\vert$\kern0.3em}}%
     \def\restrict{\upharpoonright}%
\def\twoSet#1#2{\left\{%
\vphantom{#2}#1\thinspace\right|\nolinebreak[3]\left.%
  #2%
  \vphantom{#1}%
  \right\}%
}
\def\oneSet#1{\left\lbrace#1\right\rbrace}
\newif\if@nstr
\def\setstrfalse{\let\if@nstr=\iffalse}
\def\setstrtrue{\let\if@nstr=\iftrue}
\def\@nstr #1#2{
\def\@@nstr ##1#1##2##3\@@nstr{\ifx
\@nstr ##2\setstrfalse \else \setstrtrue \fi }
\@@nstr #2#1\@nstr \@@nstr}
\def\@separate#1|#2@{\setFront{#1}\setBack{#2}}
\def\lb#1\rb{\@nstr|{#1} \if@nstr \@separate#1 @ \twoSet{\@setFront}{\@setBack}%
\else \@separate |{#1 }@ \oneSet{\@setBack}\fi%
}
\def\setFront#1{\def\@setFront{#1}}
\def\setBack#1{\def\@setBack{#1}}
\def\Set#1{\lb{#1}\rb}
\def\oneBrk#1{\left\langle#1\right\rangle}
\def\twoBrk#1#2{\left\langle%
\vphantom{#2}#1\thinspace\right|\nolinebreak[3]\left.%
  #2%
  \vphantom{#1}%
  \right\rangle%
}
\def\brk<#1>{\@nstr|{#1} \if@nstr \@separate#1 @ \twoBrk{\@setFront}{\@setBack}%
\else \@separate |{#1 }@ \oneBrk{\@setBack}\fi%
}
\def\lemref#1{\normalfont{lemma}~\ref{#1}}
\def\propref#1{\normalfont{proposition}~\ref{#1}}
\theoremstyle{plain}
\newtheorem{thm}{Theorem}[section]
\newtheorem{lem}[thm]{Lemma}
\newtheorem{cor}[thm]{Corollary}
\newtheorem{prop}[thm]{Proposition}
\newtheorem{defn}[thm]{Definition}
\theoremstyle{remark}
{}
{}
{}
{}
\newcommand{\N}{{\mathbbm{N}}}
\newcommand{\N}{{\mathbb{N}}}
\begin{document}
\begin{abstract} 	
    In this document we consider a way of localizing an MV-algebra. 
    Given any prime filter $F$ we find a local MV-algebra which has 
    the same poset of prime filters as the poset of prime filters 
    comparable to $F$.
\end{abstract}
\maketitle

\section{Introduction}
A local MV-algebra is one with a single maximal implication filter. 
Such algebras are of interest in the representation theory of 
MV-algebras (see \cite{LocalMVAlg} for example). 

The set of prime implication filters of an MV-algebra forms a spectral root system, 
ordered by set-inclusion. The existence of a unique maximal filter is 
equivalent to the stem of this root system  being nonempty. (The stem is the set 
$\text{Stem}=\Set{ P | P\text{ is a prime filter comparable to every 
other prime filter}}$.) Whenever the stem is non-empty 
it has a least element, the Conrad filter (defined below). This 
filter can be characterized in several ways, as we show in section 2 
below. This work
is heavily based on work of Conrad on 
lattice-ordered groups (see \cite{ConradLattice}), recasting his material in terms of implication filters in 
MV-algebras.  

In the last section we consider how to invert this characterization 
to get a prime filter into the stem of an MV-algebra. This 
localization takes a prime implication filter $P$ and finds a quotient 
in which the maximal filter over $P$ is the unique maximal filter, 
and the prime filter structure of the quotient is isomorphic to the 
set of prime filters comparable to $P$. 

In most of what follows the filters are taken to be implication 
filters rather than lattice filters. We recall that an implication 
filter is a lattice filter closed under powers. 

Given an MV-algebra $\mathcal L$, there are several sets of filters that we are interested in:
\begin{align*}
    \text{PSpec}&=\Set{P | P\text{ is a prime implication filter of 
    }\mathcal L}\\
    &=\text{ the prime spectrum;}\\
    \text{PSpec}(F)&=\Set{P | P\text{ is a prime implication filter of 
    }\mathcal L\text{ comparable to }F};\\
    \mu\text{S}&=\Set{P | P\text{ is a minimal prime filter of 
    }\mathcal L}\\
    &=\text{the minimal spectrum;}\\
    \mu\text{S}(F)&=\Set{P | P\text{ is a minimal prime filter of 
    }\mathcal L\text{ comparable to }F}.
\end{align*}

Our notation usually follows that of \cite{MainBk} with the exception 
that we use $\otimes$ instead of $\odot$. 

\section{Counits}

\begin{defn}
    $u\in\mathcal L$ is a \emph{counit} iff $u<1$ and there exists 
    some $v<1$ with $u\join v=1$.
\end{defn}

\begin{defn}\label{def:ConradFilter}
    The \emph{Conrad filter} of an MV-algebra is the implication 
    filter generated by the counits.
    
    We usually denote it by $\rsf N(\mathcal L)$ or $N$.
\end{defn}

If $N=\rsf N(\mathcal L)$ then $N$ is 
prime as $a\join b=1$, $a,b<1$ implies $a$ and $b$ are counits and 
so in $N$.  

All implication filters that contain $N$ form a chain. The following 
lemma provides an alternative characterization of the prime filters 
in this chain. 

\begin{lem}
    Let $P$ be a prime implication filter. Then $P$ contains all counits 
    iff for all $x\notin P$ and all $p\in P$ $p\geq x$.
\end{lem}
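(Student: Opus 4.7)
The plan is to use two standard facts about MV-algebras: the prelinearity identity $(x\to p)\join(p\to x)=\one$, which holds in every MV-algebra, and the fact that every implication filter is closed under modus ponens and is upward closed in the lattice order (so $a\in P$ and $a\leq b$ give $b\in P$).

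For the forward direction, assume $P$ contains every counit, and fix $x\notin P$ and $p\in P$; the goal is to show $p\geq x$, equivalently $x\to p=\one$. I would argue by cases using prelinearity. If $p\to x=\one$ then $p\leq x$, so upward closure forces $x\in P$, contradicting $x\notin P$. If instead both $x\to p<\one$ and $p\to x<\one$, then prelinearity itself exhibits each of these elements as a counit, with the other as the required companion element strictly below $\one$; by hypothesis both therefore lie in $P$, and then $p\in P$ together with $p\to x\in P$ yields $x\in P$ by modus ponens, again a contradiction. The only remaining possibility is $x\to p=\one$, which gives $p\geq x$.

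For the reverse direction, assume the order condition and let $u$ be an arbitrary counit with companion $v<\one$ satisfying $u\join v=\one$. Since $\one=u\join v\in P$ and $P$ is prime, either $u\in P$ or $v\in P$. Suppose for contradiction that $u\notin P$; then $v\in P$, and applying the hypothesis with $x=u$ and $p=v$ gives $v\geq u$. But then $u\join v=v<\one$, contradicting $u\join v=\one$. Hence $u\in P$, and as $u$ was arbitrary, $P$ contains all counits.

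The step I expect to require the most care is the observation, in the forward direction, that when $x\to p$ and $p\to x$ are both strictly less than $\one$ each one serves as the counit-witness for the other; this is precisely where prelinearity is indispensable, and it is the only place where MV-algebra specific structure (beyond the filter axioms) enters the argument. Everything else reduces to routine use of primeness, modus ponens, and upward closure.
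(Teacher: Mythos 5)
Your proof is correct and follows essentially the same route as the paper: prelinearity to exhibit $x\to p$ and $p\to x$ as counits when both are below $\one$, modus ponens to derive the contradiction $x\in P$, and primeness plus the order condition for the converse. The only cosmetic difference is that you argue the forward direction directly by cases while the paper states it contrapositively.
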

\begin{proof}
    Suppose that $x\notin P$ and $y\in P$ with $x\not\le y$. 
    We know that $(x\to y)\join(y\to x)=1$. 
    
    As $x\not\le y$ we have $x\to y<1$, and $y\not\le x$ implies 
    $y\to x<1$ and so $y\to x$ is a co-unit. 
    
    If it is in $P$ then so is $x\meet y= (y\to x)\otimes y$, contradicting $x\notin P$. 
    Thus $P$ cannot contain 
    all co-units. 
    
    Conversely if $a$ is a co-unit and $a\join b=1$ for some $b>0$. 
    One of $a$ or $b$ is in $P$ (as $P$ is prime). If $a\notin P$ 
    then $a\le b$ which is impossible, so $a\in P$.  
\end{proof}

A slight variation of this proof lets us see that filters are incomparable 
because of counits.

\begin{lem}\label{lem:inCompFilCOU}
    Let $P$ and $Q$ be incomparable implication filters. Then there 
    is a counit in $Q\setminus P$.
\end{lem}
\begin{proof}
    Suppose not,  ie every counit in $Q$ is also in $P$. As $P$ and 
    $Q$ are incomparable we can find $x\in Q\setminus P$ and $y\in 
    P\setminus Q$. Thus $x\not\le y$ and $y\not\le x$ and so 
    $x\to y<1$ and $y\to x<1$ and $(x\to y)\join(y\to x)=1$. So
    $y\to x$ is a counit in $Q$ and (by assumption) must be in $P$. 
    As $y\in P$ we now have 
    $x\meet y= y\otimes(y\to x)\in P$ contradicting $x\notin P$.
\end{proof}

The next two results show that $N$ is actually the minimum prime 
filter comparable to all prime filters.

\begin{prop}\label{prop:incompUn}
    Let $P$ be any prime implication filter that does not contain all 
    counits. Then there is a prime implication filter incomparable 
    to $P$.
\end{prop}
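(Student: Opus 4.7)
The plan is to use the counit $u \notin P$ to build a prime filter $Q$ incomparable to $P$. Since $u$ is a counit there is some $v<1$ with $u\join v=1$, and since $P$ is prime and $1=u\join v\in P$ while $u\notin P$, we must have $v\in P$. I will produce a prime implication filter $Q$ containing $u$ but not containing $v$; then $u\in Q\setminus P$ and $v\in P\setminus Q$ witness incomparability.

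The first and main step is to show that $v$ does not lie in the implication filter generated by $u$, i.e.\ that $u^n\not\le v$ for every $n\ge1$. I would argue by Chang's subdirect representation: every MV-algebra embeds into a product of MV-chains, and in a chain $u\join v=1$ forces $u=1$ or $v=1$, so certainly $u^n\join v^n=1$. Pulling this identity back to $\mathcal L$ coordinatewise gives $u^n\join v^n=1$ in $\mathcal L$. Since $v^n\le v$, this strengthens to $u^n\join v=1$. Thus $u^n\le v$ would force $v=1$, contradicting that $v$ is a counit.

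Next I apply Zorn's lemma to the collection of implication filters that contain $u$ and omit $v$; this collection is nonempty by the preceding step, and unions of chains remain implication filters that omit $v$. Let $Q$ be a maximal such filter. A standard argument shows $Q$ is prime. If $x\join y\in Q$ but $x,y\notin Q$, then by maximality the filters generated by $Q\cup\{x\}$ and $Q\cup\{y\}$ both contain $v$, so
\[
v\ge q_1\otimes x^n\qquad\text{and}\qquad v\ge q_2\otimes y^m
\]
for some $q_1,q_2\in Q$. Taking $k=\max(n,m)$ and $q=q_1\otimes q_2$, monotonicity of $\otimes$ and $x^k\le x^n$, $y^k\le y^m$ give $v\ge q\otimes x^k$ and $v\ge q\otimes y^k$, hence $v\ge q\otimes(x^k\join y^k)$ using distributivity of $\otimes$ over $\join$. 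Reducing to chains as above shows $(x\join y)^k=x^k\join y^k$, so $v\ge q\otimes(x\join y)^k$. But $x\join y\in Q$ makes the right-hand side an element of $Q$, forcing $v\in Q$, a contradiction. Hence $Q$ is prime, and the construction of the desired incomparable filter is complete.

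The main obstacle is the computation $u^n\not\le v$; once that is in hand the rest is the familiar Zorn plus ``maximal filter avoiding an element is prime'' pattern, adapted to implication filters using the two MV-chain identities $\otimes$-over-$\join$ distributivity and $(x\join y)^k=x^k\join y^k$.
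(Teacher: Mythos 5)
Your argument is correct, but it is organized differently from the paper's. The paper routes through its preceding lemma to extract $g\notin P$ and $p\in P$ with $p\not\geq g$, manufactures a counit pair from the Chang identity $(g\to p)\join(p\to g)=1$, checks that $g\to p\in P$ (since $g\to p\geq p$) and $p\to g\notin P$ (via $(p\to g)\otimes(p\join g)=g$), and lets $Q$ be maximal avoiding $g\to p$, so that primeness of $Q$ forces $p\to g\in Q$; the witnesses are $g\to p$ and $p\to g$. You instead work with the hypothesized counit $u\notin P$ and its partner $v$ directly, noting $v\in P$ by primeness of $P$, and your witnesses are $u$ and $v$ themselves. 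Both proofs follow the same pattern --- split a counit pair between $P$ and a maximal implication filter $Q$ avoiding the member of the pair that lies in $P$ --- so the real difference is which pair is used; your version is the more literal reading of the hypothesis and does not need the characterization lemma. One remark on economy: the step you flag as the main obstacle, that $u^{n}\not\le v$ for all $n$, can be skipped entirely. Take $Q$ maximal among implication filters avoiding $v$ (the trivial filter $\{1\}$ avoids $v$, so Zorn applies); your own maximality argument shows such a $Q$ is prime, and then $u\join v=1\in Q$ together with $v\notin Q$ gives $u\in Q$ for free. The computation you do give is nonetheless valid: $(u\join v)^{n}=u^{n}\join v^{n}$ holds in every MV-chain, hence in $\mathcal L$ by the subdirect representation, so $u^{n}\join v=1$ and $\leftGen u\rightGen$ indeed avoids $v$.
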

\begin{proof}
    As $P$ does not contain all counits we know that there is some 
    $g\notin P$ that is not below $P$, ie there is some $p\in P$ 
    with $p\not\geq g$. Of course $g\not\geq p$. Thus 
    $g\to p<1$ and $p\to g<1$ and $(g\to p)\join(p\to
    g)=1$. 
    
    As $(p\to g)\otimes(p\join g)=g$ we must have $p\to g\notin P$. 
    
    Let $Q$ be maximal avoiding $g\to p$. Then $Q$ is prime and as $(g\to p)\join(p\to 
    g)=1\in Q$ we have $p\to g\in Q\setminus P$. 
    By construction $g\to p\in P\setminus Q$ and so these two 
    ideals are incomparable.
\end{proof}

\begin{prop}
    If $P$ is a prime implication filter then either $N\subseteq P$ or 
    $P\subseteq N$.
\end{prop}
\begin{proof}
    If $P$ is not a subset of $N$ then we can find $p\in P\setminus 
    N$. $p\notin N$ implies $p$ is below $N$ and so 
    $N\subseteq[p,1]\subseteq P$. 
\end{proof}

Thus $N$ is the minimum  prime implication filter comparable to all 
others. The existence of such a filter implies that $N$ is a proper 
filter,  as if we have a minimal prime implication filter $F$ comparable to 
all others then it must contain all counits -- by 
\propref{prop:incompUn} and so $N$ exists and so $F$ equals $N$. 

Since any desired root system is the root 
system of an MV-algebra (\cite{SpecRootSys}), we see that it is possible to have 
non-trivial $N$. 

\begin{prop}\label{prop:nonMinN}
    $N$ is a minimal prime implication filter iff $N=\Set1$.
\end{prop}
\begin{proof}
    The right to left implication is immediate.
    
    If $N$ is minimal then it is the unique minimal implication 
    filter and so must equal $\Set1$ -- as we know the intersection 
    of all minimal implication filters is $\Set1$.
    
    Or just notice that $\mathcal L$ embeds into $\prod_{m\in\mu 
    S}\mathcal L/m= \mathcal L/N$ is linearly ordered,  and so 
    $\mathcal L$ is linearly ordered which implies $N=\Set1$.
\end{proof}

We also note that if $N$ is proper then there is a 
unique maximal implication filter -- the one that contains $N$. We 
also have the converse.
\begin{prop}
    If there is exactly one maximal proper implication filter then it 
    contains all counits.
\end{prop}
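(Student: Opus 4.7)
The plan is to argue by contradiction using \propref{prop:incompUn}. Assume $M$ is the unique maximal proper implication filter, and suppose for contradiction that some counit does not lie in $M$.

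First I would verify that $M$ is prime. This is a standard fact about MV-algebras: the quotient $\mathcal L/M$ by a maximal implication filter is simple, hence embeds in $[0,1]$, hence is linearly ordered, so $M$ is prime. With primeness established, \propref{prop:incompUn} applies to $M$ (the hypothesis that $M$ fails to contain all counits is exactly what that proposition needs) and produces a prime implication filter $Q$ incomparable to $M$.

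Since $Q$ is a proper implication filter, a routine Zorn's lemma argument extends $Q$ to some maximal proper implication filter $M'$. The uniqueness hypothesis forces $M' = M$, so $Q \subseteq M$, contradicting the incomparability of $Q$ and $M$. Hence every counit lies in $M$.

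The only substantive ingredient beyond \propref{prop:incompUn} is the primeness of $M$; if one wished to avoid invoking that fact externally, an alternative route is to observe that the uniqueness hypothesis forces every prime implication filter to sit inside $M$ (again by Zorn), so that $M$ itself lies in the stem and therefore the Conrad filter $N$ is proper. The dichotomy $N \subseteq P$ or $P \subseteq N$ for any prime $P$, applied to $P = M$, then yields $N \subseteq M$ (the alternative $M \subsetneq N$ is excluded by maximality of $M$ together with properness of $N$), and $M$ contains all counits by the definition of $N$. I expect no real obstacle: the argument is essentially a single extension-to-a-maximal-filter step combined with the uniqueness assumption.
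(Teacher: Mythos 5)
Your main argument is correct, but it is genuinely different from the paper's proof. The paper argues directly: given a counit $a$ with $a\join b=1$ and $a,b<1$, it forms the co-annihilator $F_{b}=\{x\mid x\join b=1\}$, checks that this is a proper implication filter (upward closed, meet closed, and closed under powers since $x^{n}\join b\geq(x\join b)^{n}=1$) containing $a$, and concludes $a\in F_{b}\subseteq M$ by the uniqueness of $M$. That route is self-contained and never needs to know that $M$ is prime. Your route instead runs \propref{prop:incompUn} in contrapositive form, which forces you to import the external fact that a maximal implication filter is prime (via simplicity and linear orderedness of $\mathcal L/M$); that fact is true and standard (it is in the cited Cignoli--D'Ottaviano--Mundici book), but it is a nontrivial piece of MV-algebra structure theory that the paper's proof deliberately avoids. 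Once primeness is granted, your contradiction argument is sound: the incomparable prime $Q$ produced by \propref{prop:incompUn} is proper, extends by Zorn to a maximal filter, which must be $M$, contradicting incomparability. What your approach buys is a cleaner conceptual link to the comparability/stem picture; what the paper's approach buys is elementarity and independence from the classification of simple MV-algebras. One caveat: your ``alternative route'' does not actually avoid the primeness of $M$ as claimed, since placing $M$ in the stem presupposes that $M$ is prime; so that aside does not deliver the promised economy, and the first argument should be regarded as the proof.
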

\begin{proof}
    Let $M$ be the maximum implication filter. Let $a,b<1$ with 
    $a\join b=1$. Let 
    $F_{b}=\Set{ x| x\join b=1}$. Then $0\notin F_{b}$, $a\in F_{b}$ 
    and it is easy to see that $F_{b}$ is a lattice filter. Also, 
    $x\in F_{b}$ implies 
    $x^{n}\join b\geq x^{n}\join b^{n}= (x\join b)^{n}= 1$ and so 
    $F_{b}$ is an implication filter. Hence $a\in F_{b}\subseteq M$. 
\end{proof}

Thus, if there is a maximum implication filter $M$ then  $N\subseteq 
M$ and $N$ is proper.

\section{Localization}
Let $P$ be a prime implication filter. We seek a quotient of 
$\mathcal L$ in which $P$ contains the Conrad filter. The 
construction we give below also preserves the structure of 
$\text{PSpec}(P)$.

\begin{defn}
    Let $P$ be a prime implication filter. Then
$$
\ell(P)=\leftGen\Set{x\to p | p\in P\text{ and }x\notin P}\rightGen.
$$
\end{defn}

Because of \lemref{lem:inCompFilCOU} we need to quotient out by at 
least $\ell(P)$ in order to make $P$ contain all counits in a 
quotient.

It is clear that $\ell(P)\subseteq P$ as $x\to p\geq p$ for any $p\in 
P$.
In general this inclusion is strict,  with the only exception being minimal prime 
filters.
\begin{lem}\label{lem:minimalL}
    $P$ is minimal prime iff $\ell(P)=P$.
\end{lem}
\begin{proof}
    If $P$ is minimal prime and $p\in P$ then there is some $t\notin 
    P$ with $t\join p=1$. Therefore $t\to p= 1\to p= p\in\ell(P)$.
    
    If $\ell(P)=P$ and $p\in P$ then $p\geq x\to p'$ for some 
    $x\notin P$ and $p'\in P$. Now $p'\to x\notin P$ else we would 
    have $p'\otimes(p'\to x)= p'\meet x\in P$ and so $x\in P$. 
    Also $p\join(p'\to x)\geq (x\to p')\join(p'\to x)= 1$. Thus $P$ 
    must be minimal.
\end{proof}

The next few lemmas show the relationship of $\ell(P)$ to the minimal 
filters below $P$.

\begin{lem}\label{lem:minimalInt}
    If $m\subseteq P$ is minimal prime then $\ell(P)\subseteq m$.
\end{lem}
\begin{proof}
    Let $x\notin P$ and $p\in P$. Then $p\otimes(p\to x)= p\meet x$ 
    implies $p\to x\notin P$ and so is not in $m$. But
    $(x\to p)\join(p\to x)=1\in m$ and $m$ is prime,  so $x\to p\in 
    m$.
\end{proof}

\begin{lem}\label{lem:minimalConv}
    Let $p\in P\setminus\ell(P)$. Then there is some minimal prime filter
    $m\subseteq P$ with $p\notin m$.
\end{lem}
\begin{proof}        
    Look in $\mathcal L/\ell(P)$. Then $[p]\not=1$ and is in $P/\ell(P)$.  
    We also know that the Conrad filter of $\mathcal L/\ell(P)$ is 
    contained in $P/\ell(P)$ -- since $x\notin P$ and $p\in P$ implies 
    $x\to p\in\ell(P)$ and so $x\le p\mod\ell(P)$. All minimal filters 
    must be subsets of the Conrad filter and so take $M$ to be a minimal 
    prime filter of $\mathcal L/\ell(P)$ that avoids $[p]<[1]$. Then 
    $M\subseteq P/\ell(P)$ and so the preimage $M'$ gives a prime subfilter of 
    $P$ that avoids $p$. 
    
    Any minimal filter of $\mathcal L$ contained in $M'$ works.
\end{proof}

\begin{thm}\label{thm:localChar}
    $$\ell(P)=\bigcap\Set{m | m\in\mu S\text{ and } m\subseteq P}.$$
\end{thm}
\begin{proof}
    By \lemref{lem:minimalInt} we know that LHS$\subseteq$RHS.
    
    From \lemref{lem:minimalConv} we know that $p\notin$LHS implies 
    $p\notin$RHS,  i.e. RHS$\subseteq$LHS.
\end{proof}

We can now define the localization of an MV-algebra at a prime 
implication filter. 
\begin{defn}\label{def:localization}
    Let $P$ be a prime implication filter of an MV-algebra $\mathcal 
    L$. Then the \emph{localization of $\mathcal L$ at $P$} is the 
    MV-algebra $\mathcal L/\ell(P)$.
\end{defn}

If $Q\subseteq P$ are two prime implication filters then 
we have $\Set{m | m\in\mu S\text{ and } m\subseteq Q}\subseteq\Set{m | m\in\mu S\text{ and } m\subseteq P}$
and so $\ell(P)\subseteq\ell(Q)$ (from the theorem). Hence there is a natural MV-morphism
$\mathcal L/\ell(P)\to \mathcal L/\ell(Q)$.

And finally a universal property of this localization.

We recall that if $f\colon\mathcal L\to\mathcal M$ is an MV-morphism 
then the \emph{shell} of $f$ is 
$$
\text{sh}(f)=f^{-1}[1]=\Set{x | f(x)=1}
$$
is an implication filter in $\mathcal L$.

\begin{thm}
    Let $P$ be any filter and $f\colon\mathcal L\to\mathcal M$ such 
    that $\text{sh}(f)\subseteq P$ and $\rsf N(\mathcal M)\subseteq 
    f[P]\uparrow$. 
    
    Then $\ell(P)\subseteq\text{sh}(f)$.
\end{thm}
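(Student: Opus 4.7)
The plan is to verify the inclusion on the generating set of $\ell(P)$. Since $\text{sh}(f)$ is itself an implication filter, it suffices to check that $f(x\to p) = 1$ for every generator $x\to p$ with $p\in P$ and $x\notin P$.

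Fix such a pair and, for contradiction, suppose $f(x\to p) < 1$. Applying $f$ to the identity $(x\to p)\vee(p\to x) = 1$ gives $f(x\to p)\vee f(p\to x) = 1$ in $\mathcal M$, so I would split on the value of $f(p\to x)$. If $f(p\to x) = 1$, then $p\to x\in\text{sh}(f)\subseteq P$, and the MV-identity $p\otimes(p\to x) = p\meet x$ forces $x\in P$, contradicting $x\notin P$.

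Otherwise $f(p\to x) < 1$ as well, so $f(p\to x)$ is a counit of $\mathcal M$ (witnessed by $f(x\to p)$) and by hypothesis lies in $f[P]\uparrow$; pick $q\in P$ with $f(q)\le f(p\to x)$. This is equivalent to $f(q\to(p\to x)) = 1$, i.e.\ $q\to(p\to x)\in\text{sh}(f)\subseteq P$, and multiplying by $q\in P$ then yields $q\meet(p\to x) = q\otimes(q\to(p\to x))\in P$, whence $p\to x\in P$. Combined with $p\in P$, the identity $p\otimes(p\to x) = p\meet x$ again produces $x\in P$, the desired contradiction.

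The substantive step is the second case, where the hypothesis $\rsf N(\mathcal M)\subseteq f[P]\uparrow$ is used in its intended way -- to pull counits of $\mathcal M$ back into $P$ through $\text{sh}(f)$ -- after which two applications of the standard MV-identity $q\otimes(q\to r) = q\meet r$ close the argument. The only point requiring any thought is noticing that $f(p\to x)$ being strictly below $1$ together with $f(x\to p)\vee f(p\to x) = 1$ is precisely what makes $f(p\to x)$ a counit, and hence eligible for the hypothesis on $\rsf N(\mathcal M)$.
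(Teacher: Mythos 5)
Your proof is correct. The paper's own argument uses the same ingredients --- prelinearity $(x\to p)\join(p\to x)=1$, the identity $q\otimes(q\to r)=q\meet r$, and the hypothesis $\rsf N(\mathcal M)\subseteq f[P]\uparrow$ --- but organizes the case split differently: it asks whether $f(x)$ itself lies in $f[P]\uparrow$. If not, then since $f[P]\uparrow$ is an implication filter containing all counits of $\mathcal M$, it dominates its complement (the first lemma of Section 2), so $f(x)\le f(p)$ and $f(x\to p)=1$ immediately; if so, some $q\in P$ has $f(q)\le f(x)$, whence $q\to x\in\text{sh}(f)\subseteq P$ and $x\in P$, a contradiction. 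You instead case on the values of $f(x\to p)$ and $f(p\to x)$ and apply the counit hypothesis to $f(p\to x)$ rather than to $f(x)$. Both routes are sound. Yours is somewhat more self-contained --- it does not invoke the earlier domination lemma, only the definition of a counit --- and it makes fully explicit the step of pulling a counit of $\mathcal M$ back to an element of $P$ via some $q\in P$ with $f(q)$ below it, a step the paper compresses (and where the paper writes $f[P]$ when $f[P]\uparrow$ is what is meant). The paper's version is shorter once the domination lemma is in hand.
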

\begin{proof}
    Let $x\notin P$ and $p\in P$. If $f(x)\notin f[P]$ then 
    $f(x)\le f(p)$ and so $f(x\to p)=1$,  i.e. $x\to 
    p\in\text{sh}(f)$.
    
    If $f(x)\in f[P]$ then for some $p\in P$ we have $x\to p$ and 
    $p\to x$ both in the shell of $f$ and hence in $P$. But 
    then $x\meet p= p\otimes (p\to x)\in P$ -- contradiction. 
\end{proof}

From the theorem we see that if $f$ takes $P$ to a filter containing 
all counits then $f$ factorizes through $\mathcal L/\ell(P)$, and so, 
in some sense, $\mathcal L/\ell(P)$ is the largest quotient in which 
$P$ contains all counits (or dominates its complement). 

The assumption that $\text{sh}(f)\subseteq P$ is essential, else the 
theorem yields only that the smaller set 
$\ell(P\join\text{sh}(f))\subseteq\text{sh}(f)$. Indeed if $P,Q$ are 
incomparable prime filters then 
$\rsf N(\mathcal L/Q)=\Set1\subseteq P/Q$ but if $q\in Q\setminus P$ 
and $p\in P\setminus Q$ then $q\to p\in\ell(P)\setminus Q$ -- else 
$p\meet q= q\otimes (q\to p)\in Q$, contradicting $p\notin Q$.

\begin{lem}
    Let $F$ be a prime filter. Then 
    $\ell(P)\subseteq F$ iff $F$ is comparable to $P$.
\end{lem}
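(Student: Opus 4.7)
The plan is to handle the two implications separately. For the backward direction, I would do a two-case split on the comparability. If $P\subseteq F$, then since $\ell(P)\subseteq P$ (noted in the paper immediately after its definition), we get $\ell(P)\subseteq F$ for free. If $F\subseteq P$, I would pick any minimal prime implication filter $m\subseteq F$ (which exists by a standard Zorn argument on prime filters below $F$). Since $m\subseteq F\subseteq P$, the filter $m$ appears in the intersection of \thmref{thm:localChar}, so $\ell(P)\subseteq m\subseteq F$. This is a direct application of the characterization of $\ell(P)$ as an intersection of minimal primes below $P$.

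For the forward direction, I would argue by contradiction: assume $\ell(P)\subseteq F$ but $F$ and $P$ are incomparable. Then I can pick $x\in F\setminus P$ and $y\in P\setminus F$. Since $x\notin P$ and $y\in P$, the element $x\to y$ is one of the generators of $\ell(P)$, hence $x\to y\in\ell(P)\subseteq F$. Combined with $x\in F$ and the MV-algebra identity $x\meet y=x\otimes(x\to y)$ (used already in the paper), we conclude $y\geq x\meet y\in F$, so $y\in F$, contradicting $y\notin F$.

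The main obstacle I anticipate is the backward direction in the case $F\subseteq P$, because here one must actually invoke \thmref{thm:localChar}; the forward direction and the case $P\subseteq F$ use only the trivial containment $\ell(P)\subseteq P$. One subtlety to check is that a minimal prime filter $m\subseteq F$ really does exist, but this follows from the usual argument that the intersection of a descending chain of prime filters is prime, so Zorn's lemma applies downwards from $F$. No further calculation is required.
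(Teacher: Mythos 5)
Your proof is correct, and the forward direction takes a genuinely different route from the paper's. The paper proves that direction by passing to the quotient $\mathcal L/\ell(P)$: since $P/\ell(P)$ contains the Conrad filter of the localization, every prime filter of $\mathcal L/\ell(P)$ is comparable to $P/\ell(P)$, and comparability is then pulled back along $\eta^{-1}$. Your argument instead works directly with the generators of $\ell(P)$: if $x\in F\setminus P$ and $y\in P\setminus F$, then $x\to y\in\ell(P)\subseteq F$ forces $y\geq x\meet y= x\otimes(x\to y)\in F$, a contradiction. This is more elementary (no quotient machinery is needed, and in fact it never uses primeness of $F$, only that $F$ is an implication filter closed under $\otimes$), and it mirrors the computation already used in \lemref{lem:inCompFilCOU}. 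For the backward direction, your case $F\subseteq P$ invokes \thmref{thm:localChar} via a minimal prime $m\subseteq F\subseteq P$, whereas the paper uses the monotonicity $\ell(P)\subseteq\ell(F)$ together with the trivial containment $\ell(F)\subseteq F$; both reduce to the same characterization of $\ell(P)$ as the intersection of the minimal primes below $P$ (your single minimal prime could equally be handled by \lemref{lem:minimalInt} alone), and your appeal to Zorn's lemma for the existence of $m$ is the standard one, since the intersection of a descending chain of prime implication filters is prime. I see no gaps.
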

\begin{proof}
    If $P\subseteq F$ then $\ell(P)\subseteq P\subseteq F$. If 
    $F\subseteq P$ then $\ell(P)\subseteq \ell(F)\subseteq F$. 
    
    Conversely, if $\ell(P)\subseteq F$ then $F/\ell(P)$ is prime in 
    $\mathcal L/\ell(P)$ and so comparable to $P/\ell(P)$. Hence 
    $F= \eta^{-1}[F/\ell(P)$ is comparable to $\eta^{-1}[P/\ell(P)]= 
    P$.
\end{proof}

\begin{thm}
    $\text{PSpec}(P)$ is order-isomorphic to $\text{PSpec}(\mathcal 
    L/\ell(P))$.
\end{thm}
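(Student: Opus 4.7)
The plan is to invoke the standard correspondence between implication filters of $\mathcal L/\ell(P)$ and implication filters of $\mathcal L$ that contain $\ell(P)$, and then to use the preceding lemma to identify the latter set with $\text{PSpec}(P)$.

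Concretely, let $\eta\colon\mathcal L\to\mathcal L/\ell(P)$ be the quotient map. I would first recall (or very briefly verify) that $F\mapsto F/\ell(P)$ is an order-preserving bijection between the implication filters of $\mathcal L$ containing $\ell(P)$ and the implication filters of $\mathcal L/\ell(P)$, with inverse $G\mapsto\eta^{-1}[G]$, and that this bijection preserves primality in both directions (for the forward direction, $a\join b\in F$ iff $[a]\join[b]\in F/\ell(P)$; for the reverse, $\eta^{-1}$ of a prime filter is prime because $\eta$ is a surjective MV-morphism).

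Next I would apply the preceding lemma: a prime filter $F$ of $\mathcal L$ satisfies $\ell(P)\subseteq F$ if and only if $F$ is comparable to $P$, i.e.\ $F\in\text{PSpec}(P)$. Combining this with the correspondence above gives the order-preserving bijection
$$
\text{PSpec}(P)\;\longleftrightarrow\;\text{PSpec}(\mathcal L/\ell(P)),\qquad F\longmapsto F/\ell(P).
$$

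There is no real obstacle; the only care needed is to be explicit that the lemma just proved is exactly what converts ``contains $\ell(P)$'' into ``comparable to $P$,'' and that the correspondence restricts cleanly to prime filters (rather than merely to arbitrary implication filters). Once those two ingredients are in place, order preservation in each direction is immediate from the definitions of $F/\ell(P)$ and $\eta^{-1}$.
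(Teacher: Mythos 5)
Your argument is exactly the paper's: the paper also combines the standard correspondence between prime filters of $\mathcal L/\ell(P)$ and prime filters of $\mathcal L$ containing $\ell(P)$ with the preceding lemma identifying the latter set with $\text{PSpec}(P)$. You merely spell out the details of the correspondence that the paper takes as known, so the proposal is correct and follows the same route.
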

\begin{proof}
    We know that $\text{PSpec}(\mathcal 
    L/\ell(P))$ is order-isomorphic to \\
    $\Set{ F | F\text{ is a prime 
    filter with }\ell(P)\subseteq F}$ and from the lemma the latter 
    set is $\text{PSpec}(P)$.
%
%
%
%
%
\end{proof}

\begin{bibdiv}
\begin{biblist}
    \DefineName{cgb}{Bailey, Colin G.}
    \DefineName{jso}{Oliveira,  Joseph S.}

\bib{mvpaperOne}{article}{
title={The Prime Filters of an MV-Algebra}, 
author={cgb}, 
status={in preparation},
eprint={arXiv:0907.3328v1 [math.RA]}
}

\bib{LocalizationViaLU}{article}{
title={Localization of MV -algebras and lu-groups},
author={Dumitru Bu\c sneag},
author={Dana Piciu},
journal={Algebra Universalis},
volume={50},
date={2003},
pages={359--380}
}

\bib{MainBk}{book}{ 
author={Cignoli, R.}, 
author={D'Ottaviano, I.M.}, 
author={Mundici, D.}, 
title={Algebraic Foundations of Many-valued Reasoning}, 
publisher={Kluwer}, 
date={2000}
}

\bib{SpecRootSys}{article}{
title={The Poset of Prime $\ell$-ideals of an Abelian $\ell$-group 
with a Strong Unit},
author={Cignoli, R.},
author={Torrens, A.},
journal={Journal of Algebra},
volume={184},
date={1996},
pages={604--612}
}

\bib{ConradLattice}{article}{
title={The lattice of all convex $\ell$-subgroups of a lattice-ordered 
group},
author={Paul Conrad},
journal={Czechoslovak Mathematical Journal},
volume={15},
date={1965},
pages={101--123}
}

\bib{ConradStrThms}{article}{
title={Some Structure Theorems for Lattice-Ordered Groups},
author={Paul Conrad},
journal={Transactions of the American Mathematical Society}, 
volume={99},
date={1961},
pages={212--240}
}

\bib{LocalMVAlg}{article}{
title={Local algebras in the representation of MV-algebras},
author={Di Nola, A.},
author={Esposito, I.},
author={Gerla, B.},
journal={Algebra Universalis},
volume={56},
date={2007},
pages={133--164}
}

\end{biblist}
\end{bibdiv}

\end{document}
                                                                                                                                                                                                                                                                                                                                                                                                                                                                                                                                                                                                                                                                                                                                                                                                                                                                                                                                                                                                                                                                                                                                                                                                                                                                                                                                                                                                                                                                                                                                                                                                                                                                                                                                                                                                                                                                                                                                                                                                                                                                                                                                                                                                                                                                                                                                                                                                                                                                                                                                                                                                                                                                                                                                                                                                                                                                                              n(\leftGen a\rightGen\meet\leftGen 
    b\rightGen)= F\join\leftGen a\join b\rightGen$ and so $a\meet 
    b\notin F$. 
    
    \item[3 implies 4] is immediate as $1\in F$.
    
    \item[4 implies 5] $(a\to b)\join(b\to a)=1$ for any $a, b\in 
    \mathcal L$. Thus $a\to b\in F$ or $b\to a\in F$. The former 
    implies $[a]_{F}\le [b]_{F}$ and the latter implies 
    $[b]_{F}\le[a]_{F}$. 
    
    \item[5 implies 6] Suppose 6 fails and $A,B$ are incomparable 
    implication filters containing $F$. Let $a\in A\setminus B$ and 
    $b\in B\setminus A$. Wolog $a\to b\in F$, but this implies 
    $a\meet b= a\otimes(a\to b)\in F$ and so $b\in F\subseteq A$ -- 
    contradiction.
    
    \item[6 implies 7]  $F$ is the 
    intersection of a set of regular filters that must be a chain by 6. 
    
    \item[7 implies 1] Suppose that $A\cap B\subseteq F$ but $F$ 
    contains neither $A$ nor $B$. Then let $a\in A\setminus F$ and 
    $b\in B\setminus F$ and $\rsf C$ a chain of regular filters 
    intersecting to $F$. Now $a\join b\in A\cap B\subseteq F$ so that 
    $a\join b\in G$ for all $G\in\rsf C$. $a\notin F$ implies there 
    is $G_{a}\in\rsf C$ with $a\notin G_{a}$, and $G_{b}\in\rsf C$ 
    with $b\notin G_{b}$. As $\rsf C$ is a chain we may as well 
    assume that $G_{a}\subseteq G_{b}$, but then neither $a$ nor $b$ 
    is in $G_{a}$, and so $a\join b\notin G_{a}$ -- contradiction.
%
    \end{enumerate}
\end{proof}

\begin{lem}\label{lem:strIrr}
    A prime implication filter is strongly meet irreducible iff it is 
    regular.
\end{lem}
\begin{proof}
    Let $F$ be regular, and $F, F'$ be the discrete pair created by 
    $F$. Let $\rsf X$ be a  set of implication filters intersecting 
    to $F$. If $G\in\rsf X$ then either $F'\subseteq G$ or $G=F$. But 
    if $F'\subseteq G$ for all $G\in\rsf X$ then $F\subsetneq 
    F'\subseteq\bigcap\rsf X= F$ -- contradiction. Hence $F\in\rsf X$. 
    
    Conversely,  let $F$ be strongly meet irreducible. Then $F$ is 
    the intersection of a chain of regular implication filters. As 
    $F$ is strongly meet irreducible, $F$ must be in this chain,  and 
    is therefore regular.
\end{proof}

Strong meet irreducibles correspond to regular implication filters 
and meet irreducibles to prime implication filters. 

\begin{prop}\label{prop:Dset}
    Let $F$ be any implication filter. Define
    $$
    D(F)=\Set{ G | G \text{ is a regular implication filter 
    containing }F}.
    $$
    Then 
    \begin{enumerate}[(a)]
	\item $D(A\cap B)= D(A)\cup D(B)$; 
	
	\item if $\rsf C$ is a chain of regular implication filters 
	then 
	$D(\bigcap\rsf C)=\bigcup_{G\in\rsf C}D(G)$.
    \end{enumerate}
\end{prop}
\begin{proof}
    \begin{enumerate}[(a)]
	\item[ ]
	\item If $F\in D(A)\cup D(B)$ then either $A\cap B\subseteq 
	A\subseteq F$ or $A\cap B\subseteq B\subseteq F$.
	
	If $A\cap B\subseteq F$ then the last lemma gives 
	$A\subseteq F$ or $B\subseteq F$.
	
	\item Clearly 
	$\bigcup_{G\in\rsf C}D(G)\subseteq D(\bigcap\rsf C)$.
	
	If $F$ is regular with $\bigcap\rsf C\subseteq F$. 
	$\bigcap\rsf C$ is prime so the implication filters above it 
	are linearly ordered. Hence every $G\in\rsf C$ is either a 
	superset or a subset of $F$. If all are supersets then 
	$F=\bigcap\rsf C$ and the regularity of $F$ then implies 
	$F\in\rsf C$,  and $F\in D(F)$. Otherwise there is some 
	$G\in\rsf C$ with $G\subseteq F$, ie $F\in D(G)$.  
    \end{enumerate}
\end{proof}

\section{Annihilators \& co-annhilators}

\begin{defn}
    Let $\ell\in\mathcal L$. Then the \emph{annihilator} of $\ell$ is 
    the set
    $$
    \text{Ann}(\ell)=\Set{ x | x\meet\ell=0}.
    $$
    The \emph{co-annihilator} of $\ell$ is 
    the set
    $$
    \text{coAnn}(\ell)=\Set{ x | x\join\ell=1}.
    $$
\end{defn}

Note that $\text{coAnn}(\ell)=\text{Ann}(\lnot \ell)^{*}$ -- as
$x\join\ell=1$ iff $\lnot x\meet\lnot\ell=0$. So these are dual 
notions. 

\begin{lem}
    The annihilator of $\ell$ is an implication ideal.
\end{lem}
\begin{proof}
    Obviously $0$ is in the annihilator. 
    
    If $y\le x\in\text{Ann}(\ell)$ then $0\le y\meet\ell\le 
    x\meet\ell= 0$ and so $y\in\text{Ann}(\ell)$.
    
    If $x,y\in\text{Ann}(\ell)$ then 
    $(x\join y)\meet\ell= (x\meet\ell)\join(y\meet\ell)= 0\join 0= 
    0$, so the annihilator is join-closed.
    
    If $x\meet \ell=0$ then 
    $0= n0= n(x\meet \ell)= nx\meet n\ell\geq nx\meet\ell\geq 0$ and 
    so $nx$ is also in the annihilator. Thus it is also 
    $\oplus$-closed. 
\end{proof}

\begin{cor}
    The co-annihilator of $\ell$ is an implication filter.
\end{cor}

\begin{lem}
    $$\bigcap_{\ell\in\mathcal L}\text{Ann}(\ell)=\Set0.$$
\end{lem}
\begin{proof}
    If $v\in\bigcap_{\ell\in\mathcal L}\text{Ann}(\ell)$ then 
    $v\in\text{Ann}(v)$ and so $v= v\meet v= 0$.
\end{proof}

\begin{cor}
    $$\bigcap_{\ell\in\mathcal L}\text{coAnn}(\ell)=\Set1.$$
\end{cor}

We might consider this as  a strong notion of annihilator and take a 
weaker notion as 
$$
\text{ann}(\ell)=\Set{ x| x\otimes\ell=0}.
$$

This is clearly a superset of the strong annihilator but is rather 
uninteresting as 
\begin{align*}
    x\otimes \ell=0 &\iff \lnot x\oplus\lnot\ell=1\\
    &\iff x\to\lnot\ell=1\\
    &\iff x\le\lnot \ell
\end{align*}
so that $\text{ann}(\ell)=[0,\lnot\ell]$.

\subsection{Non-units \& Counits}
\begin{defn}
    $u\in\mathcal L$ is a \emph{non-unit} iff $u>0$ and there exists 
    some $v>0$ with $u\meet v=0$ iff $\text{Ann}(u)\not=\Set0$.
\end{defn}

As I prefer studying filters, we will usually consider the dual 
notion.
\begin{defn}
    $u\in\mathcal L$ is a \emph{co-unit} iff $u<1$ and there exists 
    some $v<1$ with $u\join v=1$ iff $\text{coAnn}(u)\not=\Set1$.
\end{defn}

\begin{lem}
    Let $P$ be a prime implication filter. Then $P$ contains all co-units 
    iff for all $x\notin P$ and all $p\in P$ $p\geq x$.
\end{lem}
\begin{proof}
    Suppose that $x\notin P$ and $y\in P$ with $x\not\le y$. 
    We know that $(x\to y)\join(y\to x)=1$. 
    
    As $x\not\le y$ we have $x\to y<1$, and $y\not\le x$ implies 
    $y\to x<1$ and so $y\to x$ is a co-unit. 
    
    If it is in $P$ then so is $x\meet y= (y\to x)\otimes y$, contradicting $x\notin P$. 
    Thus $P$ cannot contain 
    all co-units. 
    
    Conversely if $a$ is a co-unit and $a\join b=1$ for some $b>0$. 
    One of $a$ or $b$ is in $P$ (as $P$ is prime). If $a\notin P$ 
    then $a\le b$ which is impossible, so $a\in P$.  
\end{proof}

A slight variation of this let's us see that filters are incomparable 
because of counits.

\begin{lem}\label{lem:inCompFilCOU}
    Let $P$ and $Q$ be incomparable implication filters. Then there 
    is a counit in $Q\setminus P$.
\end{lem}
\begin{proof}
    Suppose not,  ie every counit in $Q$ is also in $P$. As $P$ and 
    $Q$ are incomparable we can find $x\in Q\setminus P$ and $y\in 
    P\setminus Q$. Thus $x\not\le y$ and $y\not\le x$ and so 
    $x\to y<1$ and $y\to x<1$ and $(x\to y)\join(y\to x)=1$. So
    $y\to x$ is a counit in $Q$ and (by assumption) must be in $P$. 
    As $y\in P$ we now have 
    $x\meet y= y\otimes(y\to x)\in P$ contradicting $x\notin P$.
\end{proof}

%
%
%

It is interesting to wonder if the implication filter generated by all 
co-units is non-trivial. We know that it is $\Set{1}$ is any linear 
order, and is everything for Boolean algebras and all non-linear 
finite MV-algebras. 

We can answer this somewhat indirectly. 
\begin{prop}\label{prop:incompUn}
    Let $P$ be any prime implication filter that does not contain all 
    co-units. Then there is a regular implication filter incomparable 
    to $P$.
\end{prop}
\begin{proof}
    As $P$ does not contain all co-units we know that there is some 
    $g\notin P$ that is not below $P$, ie there is some $p\in P$ 
    with $p\not\geq g$. Of course $g\not\geq p$. Thus 
    $g\to p<1$ and $p\to g<1$ and $(g\to p)\join(p\to
    g)=1$. 
    
    As $(p\to g)\otimes(p\join g)=g$ we must have $p\to g\notin P$. 
    
    Let $Q$ be maximal avoiding $g\to p$. Then $Q$ is regular, 
    hence prime and as $(g\to p)\join(p\to 
    g)=1\in Q$ we have $p\to g\in Q\setminus P$. 
    By construction $g\to p\in P\setminus Q$ and so these two 
    ideals are incomparable.
\end{proof}

Now, if $N$ is the implication filter generated by the co-units, then $N$ is 
prime as $a\join b=1$, $a,b<0$ implies $a$ and $b$ are co-units and 
so in $N$.  

All implication filters that contain $N$ form a chain. Those that are proper 
subsets do not form a chain by the above proposition. It remains to 
show that there are no others. 

\begin{prop}
    If $P$ is a prime implication filter then either $N\subseteq P$ or 
    $P\subseteq N$.
\end{prop}
\begin{proof}
    If $P$ is not a subset of $N$ then we can find $p\in P\setminus 
    N$. $p\notin N$ implies $p$ is below $N$ and so 
    $N\subseteq[p,1]\subseteq P$. 
\end{proof}

Thus $N$ is the minimal  prime implication filter comparable to all 
others. If we have a minimal prime implication filter comparable to 
all others then it must contain all co-units -- by 
\propref{prop:incompUn} and so $N$ exists and it equals $N$. 

Using later work that shows any desired root system is the root 
system of an MV-algebra, we see that it is possible to have 
non-trivial $N$. 

\begin{prop}\label{prop:nonMinN}
    $N$ is a minimal prime implication filter iff $N=\Set1$.
\end{prop}
\begin{proof}
    The right to left implication is immediate.
    
    If $N$ is minimal then it is the unique minimal implication 
    filter and so must equal $\Set1$ -- as we know the intersection 
    of all minimal implication filters is $\Set1$.
    
    Or just notice that $\mathcal L$ embeds into $\prod_{m\in\mu 
    S}\mathcal L/m= \mathcal L/N$ is linearly ordered,  and so 
    $\mathcal L$ is linearly ordered which implies $N=\Set1$.
\end{proof}

It is of some interest to ask if regular implication filters can be 
minimal implication filters. We know that minimal ones are 
intersections of regular ones. And we know a nice characterization of 
minimal ideals,  ie every $s\in m$ has some $t\notin m$ with $s\join 
t=1$.

Another question -- if $P$ is regular and so $P,  P'$ is a discrete 
pair,  then if $g, h\in P'$ are $g$ and $h$ 
equivalent mod $P$? We know that if $g\in P'$ then in 
$\mathcal L/P$ we have the implication filter generated by $[g]_{P}$ 
must be the minimal non-zero implication filter,  and so $x\in P'$
iff there's some $n\in\N$ with $x\geq g^{n}\mod P$. This suggests that 
the answer to the question is ``no''. 

We also note, from the above, that if $N$ is proper then there is a 
unique maximal implication filter -- the one that contains $N$. We 
also have the converse.
\begin{lem}
    If there is exactly one maximal proper implication filter then it 
    contains all counits.
\end{lem}
\begin{proof}
    Let $M$ be the maximum implication filter. Let $a,b<1$ with 
    $a\join b=1$. Let 
    $F_{b}=\Set{ x| x\join b=1}$. Then $0\notin F_{b}$, $a\in F_{b}$ 
    and it is easy to see that $F_{b}$ is a lattice filter. Also, 
    $x\in F_{b}$ implies 
    $x^{n}\join b\geq x^{n}\join b^{n}= (x\join b)^{n}= 1$ and so 
    $F_{b}$ is an implication filter. Hence $a\in F_{b}\subseteq M$. 
\end{proof}

As $N\subseteq M$ we see that $N$ is proper. 

\begin{defn}
    The filter $N$ defined above is the \emph{Conrad filter}, usually 
    denoted by $\rsf{CF}(\mathcal L)$.
\end{defn}

\subsection{Localization}
Let $P$ be a prime implication filter. We seek a quotient of 
$\mathcal L$ in which $P$ becomes the Conrad filter. 
To this end we define 
$$
\ell(P)=\leftGen\Set{x\to p | p\in P\text{ and }x\notin P}\rightGen.
$$
We need to quotient out by at least $\ell(P)$ as if $P$ is Conrad 
then for 
$x\notin P$ and $p\in P$ we have $x\le p$,  i.e. $x\to p=1$. 

It is clear that $\ell(P)\subseteq P$ as $x\to p\geq p$.

\begin{lem}\label{lem:minimalL}
    $P$ is minimal prime iff $\ell(P)=P$.
\end{lem}
\begin{proof}
    If $P$ is minimal prime and $p\in P$ then there is some $t\notin 
    P$ with $t\join p=1$. Therefore $t\to p= 1\to p= p\in\ell(P)$.
    
    If $\ell(P)=P$ and $p\in P$ then $p\geq x\to p'$ for some 
    $x\notin P$ and $p'\in P$. Now $p'\to x\notin P$ else we would 
    have $p'\otimes(p'\to x)= p'\meet x\in P$ and so $x\in P$. 
    Also $p\join(p'\to x)\geq (x\to p')\join(p'\to x)= 1$. Thus $P$ 
    must be minimal.
\end{proof}

\begin{lem}\label{lem:minimalInt}
    If $m\subseteq P$ is minimal prime then $\ell(P)\subseteq m$.
\end{lem}
\begin{proof}
    Let $x\notin P$ and $p\in P$. Then $p\otimes(p\to x)= p\meet x$ 
    implies $p\to x\notin P$ and so is not in $m$. But
    $(x\to p)\join(p\to x)=1\in m$ and $m$ is prime,  so $x\to p\in 
    m$.
\end{proof}

\begin{lem}\label{lem:minimalConv}
    Let $p\in P\setminus\ell(P)$. Then there is some minimal prime filter
    $m\subseteq P$ with $p\notin m$.
\end{lem}
\begin{proof}        
    Look in $\mathcal L/\ell(P)$. Then $[p]\not=0$ and is in $P/\ell(P)$.  
    We also know that the Conrad filter of $\mathcal L/\ell(P)$ is 
    contained in $P/\ell(P)$ -- since $x\notin P$ and $p\in P$ implies 
    $x\to p\in\ell(P)$ and so $x\le p\mod\ell(P)$. All minimal filters 
    must be subsets of the Conrad filter and so take $M$ to be a minimal 
    prime filter of $\mathcal L/\ell(P)$ that avoids $[p]<[1]$. Then 
    $M\subseteq P/\ell(P)$ and so the preimage $M'$ gives a prime subfilter of 
    $P$ that avoids $p$. 
    
    Any minimal filter of $\mathcal L$ contained in $M'$ works.
\end{proof}

\begin{thm}\label{thm:localChar}
    $$\ell(P)=\bigcap\Set{m | m\in\mu S\text{ and } m\subseteq P}.$$
\end{thm}
\begin{proof}
    By \lemref{lem:minimalInt} we know that LHS$\subseteq$RHS.
    
    From \lemref{lem:minimalConv} we know that $p\notin$LHS implies 
    $p\notin$RHS,  i.e. RHS$\subseteq$LHS.
\end{proof}

We can now define the localization of an MV-algebra at a prime 
implication filter. 
\begin{defn}\label{def:localization}
    Let $P$ be a prime implication filter of an MV-algebra $\mathcal 
    L$. Then the \emph{localization of $\mathcal L$ at $P$} is the 
    MV-algebra $\mathcal L/\ell(P)$.
\end{defn}

We wish to show that localization defines a contravariant functor on the 
poset of prime implication filters.
We know that if $Q\subseteq P$ are two prime implication filters then 
we have $\Set{m | m\in\mu S\text{ and } m\subseteq Q}\subseteq\Set{m | m\in\mu S\text{ and } m\subseteq P}$
and so $\ell(P)\subseteq\ell(Q)$. Hence there is a natural MV-morphism
$\mathcal L/\ell(P)\to \mathcal L/\ell(Q)$.

\section{Plenary Filters}
\begin{defn}\label{def:plenary}
    A set $\Delta$ of regular filters is \emph{plenary} iff it is an 
    order-filter in the set of regular filters ordered by $\subseteq$ 
    such that $\bigcap\Delta=\Set1$.
\end{defn}

\begin{lem}\label{lem:plenary}
    If $\Delta$ is a plenary filter and $M\in\Delta$,  $g\notin M$ 
    then there is a value $P$ of $g$ that contains $M$,  and 
    $P\in\Delta$.
\end{lem}
\begin{proof}
    As $g\notin M$ there is a maximal $P\supseteq M$ that avoids $g$. 
    As $\Delta$ is upwards-closed we have $P\in\Delta$.
\end{proof}

\begin{defn}\label{def:plenRel}
    Let $\Delta$ be a plenary filter and $g\in\mathcal L$. Then
    $$
    \Delta_{g}=\Set{P | P\in\Delta\text{ and }P\text{ is a value of 
    }g}.
    $$
\end{defn}

\begin{prop}\label{prop:plenOne}
    Let $\Delta$ be a plenary filter,  $\ell,  m\in\mathcal L$. Then
    $\ell\join m=1$ iff $\Delta_{\ell}\cap\Delta_{m}=\emptyset$ and 
    $\Delta_{\ell}\cup\Delta_{m}$ is trivially ordered.
\end{prop}
\begin{proof}
    Let $\ell\join m=1$ and $P\in\Delta_{\ell}$. Then $P$ is prime, 
    $\ell\notin P$ and $1=\ell\join m\in P$ so $m\in P$,  so that 
    $P\notin\Delta_{m}$. Thus the intersection is empty.
    
    If $P\in\Delta_{\ell}$, $Q\in\Delta_{m}$ and $P\subseteq Q$,  
    then (as above) we have $m\in P\subseteq Q$ but $m\notin Q$ -- 
    contradiction. Thus the union is trivially ordered.
    
    Conversely,  suppose $\Delta_{\ell}\cap\Delta_{m}=\emptyset$ and 
    $\Delta_{\ell}\cup\Delta_{m}$ is trivially ordered. 
    
    Let $N$ be any prime implication filter in $\Delta$.
    
    If $N\subseteq P\in\Delta_{\ell}$ and suppose $m\notin N$. Then there is some $Q\in\Delta_{m}$ with $N\subseteq 
    Q$. As $N$ is prime $P$ and $Q$ must be comparable,  but $\Delta_{\ell}\cup\Delta_{m}$ is trivially ordered.
    Thus $m\in N$ and so $\ell\join m\in N$. 
    
    If $N\subseteq Q\in\Delta_{m}$ we get $\ell\in N$ and so 
    $\ell\join m\in N$. 
    
    If $N$ is not contained in any $P\in\Delta_{\ell}$ then (by 
    \lemref{lem:plenary}) $\ell\in N$ and so $\ell\join m\in N$. 
    
    Thus $\ell\join m\in\bigcap\Delta=\Set1$.
\end{proof}

\begin{cor}
    If $\ell\join m=1$ then
    $$
    \Delta_{\ell}\cup\Delta_{m}=\Delta_{\ell\otimes 
    m}=\Delta_{\ell\meet m}.
    $$
\end{cor}
\begin{proof}
    If $P\in\Delta_{\ell}$ then (as above) $m\in P$ and so neither 
    $\ell\otimes m$ nor $\ell\meet m$ can be in $P$. 
    $P$ is maximal avoiding these,  as $P\subset Q$ implies $\ell\in 
    Q$ (as $P$ is maximal avoiding $\ell$) and so both $\ell\otimes 
    m$ and $\ell\meet m$ are in $Q$. 
    
    Thus $\Delta_{\ell}$ is a subset of both $\Delta_{\ell\otimes 
    m}$ and $\Delta_{\ell\meet m}$ and similarly $\Delta_{m}$ is a 
    subset of both. 
    
    Conversely,  if $P\in\Delta_{\ell\otimes m}$ then (as $P$ is 
    $\otimes$-closed) at least one of $\ell$ or $m$ is not in $P$. If 
    $\ell\notin P$,  the argument above shows that $m\in P$. If 
    $P\subset Q$ then $\ell\otimes m\in Q$ and so $\ell\in Q$. Thus 
    $P\in\Delta_{\ell}$. The same argument works using 
    $\Delta_{\ell\meet m}$. 
\end{proof}

\begin{thm}\label{thm:finitelyManyValues}
    Suppose $\Delta$ is a plenary system and $g$ has only finitely 
    many values in $\Delta$. Then these are all the values of $g$.
\end{thm}
\begin{proof}
    By induction on $n$ the number of values in $\Delta$. If $n=1$ it 
    follows from the definition of plenary. We work in $\leftGen 
    g\rightGen$ so that the values of $g$ are all the maximal filters. 
    
    Suppose that $P_{1}, \dots, P_{n}$ are all values of $g$ in 
    $\Delta$ and $P_{0}$ is another value. For all $i\not=j$ let 
    $a_{ij}\in P_{j}\setminus P_{i}$ be a counit. Let 
    $a_{i}=\bigvee_{j\not=i}a_{ij}$. Then $a_{i}\notin P_{i}$ (as 
    $P_{i}$ is prime and no $a_{ij}$ is in $P_{i}$) and if $j\not=i$ 
    then $a_{i}\geq a_{ij}\in P_{j}$. Also $a_{i}$ is a counit. 
    
    Let $c=(\bigwedge_{i>0}a_{i}\to 
    a_{0})\meet(a_{0}\to\bigwedge_{i>0}a_{i})$. 
    
    If $c\in P_{0}$ then $P_{0}\not=[a_{0}]_{P_{0}}= 
    [\bigwedge_{i>0}a_{i}]_{P_{0}}= P_{0}$ as $a_{0}\notin P_{0}$ but 
    $\bigwedge_{i>0}a_{i}\in P_{0}$. 
    
    If $c\in P_{k}$ for some $k>0$ again $[a_{0}]_{P_{k}}= 
    [\bigwedge_{i>0}a_{i}]_{P_{k}}$ but $a_{0}\in P_{k}$ and (as 
    $a_{k}\notin P_{k}$) we have $\bigwedge_{i>0}a_{i}\notin P_{k}$ 
    -- contradiction.
    
    Hence $c\notin P_{i}$ for any $0\le i\le n$. Thus the values of 
    $c$ in $\Delta$ are $\Set{P_{1}, \dots,  P_{n}}$. 
    
    Let $s=\bigwedge_{i>0}a_{i}\to a_{0}$ and 
    $t=a_{0}\to\bigwedge_{i>0}a_{i}$. Then $s\meet t=c$ and $s\join 
    t=1$ and both $s, t<1$. From the corollary above we have 
    $\Delta_{c}=\Delta_{s}\cup\Delta_{t}$ and 
    $\Delta_{s}\cap\Delta_{t}=\emptyset$ and neither $\Delta_{s}$ nor 
    $\Delta_{t}$ are empty. Hence (by induction) all values of $s$ 
    and $t$ are in $\Delta$. But $P_{0}$ is a value of $c$ and so 
    must be  a value of either $s$ or $t$ -- contradiction. 
\end{proof}

\section{Radical Theory}
The radical of an MV-algebra is usually taken to be the intersction 
of all maximal implication ideals. As our perspective uses filters we 
are interested in the \emph{co-radical},  i.e. the intersection of 
all maximal implication filters. 

We will denote the coradical by $\comp{\text{Rad}}(\mathcal L)$. 

\begin{lem}\label{lem:coRad}
    $x\in\comp{\text{Rad}}(\mathcal L)$ iff $x=1$ or $\lnot x\le 
    x^{n}$ for all natural numbers $n$.
\end{lem}
\begin{proof}
    Suppose that $x\notin\comp{\text{Rad}}(\mathcal L)$ so there is 
    some maximal i-filter $P$ with $x\notin P$. Thus the i-filter 
    generated by $P\cap\Set x=\mathcal L$ and so there is some $p\in 
    P$ and $n\in N$ with $0=x^{n}\otimes p$. If $\lnot x\le x^{n}$ 
    then $p\le\lnot x^{n}\le x$ and so $x\in P$ -- contradiction. 
    
    Conversely,  suppose $x<1$ and there is an $m$ with $\lnot 
    x\not\le x^{m}$,  so that $\lnot x\to x^{m}<1$ and so there is a 
    prime i-filter $Q$ that does not contain $\lnot x\to x^{m}$. As 
    $(\lnot x\to x^{m})\join(x^{m}\to\lnot x)=1\in Q$ we must have 
    $x^{m}\to\lnot x\in Q$ and find some maximal i-filter $P$ 
    containing $Q$. Then $x^{m}\to\lnot x= \lnot x^{m}\oplus \lnot 
    x=\lnot x^{m+1}\in P$ and so $x^{m+1}\notin P$. Thus $x\notin P$ 
    and so $x\notin\comp{\text{Rad}}(\mathcal L)$.
\end{proof}

There are two cases of especial interest -- 
$\comp{\text{Rad}}(\mathcal L)=\Set 1$ or there is a 
unique maximal ideal. There are a number of conditions equivalent to 
this latter case. First we note that it is equivalent to the Conrad 
filter being proper. And it seems like it is more interesting to 
study the Conrad filter in this case.  

\section{Relativized Conrad Theory}
Much of the Conrad theory does not require that we use all of 
$\mathcal L$ but rather that we are in some implication filter $P$, 
which might not be proper. 

An implication filter $F\subseteq P$ is 
\emph{$P$-prime} iff for all $a,b\in P$ if $a\join b\in F$ then 
either $a\in F$ or $b\in F$. 

$F$ is a \emph{$P$-value for $p\in P$} iff $F$ is a maximal 
implication sub-filter of $P$ to avoid $p$. Such filters always exist 
by standard Zornification. A filter $F\subseteq P$ is 
\emph{$P$-regular} iff it is the value of some $p\in P$. 

\begin{lem}\label{lem:regFilPrimRel}
    Let $F$ be a $P$-regular implication filter. Then
    $F$ is $P$-prime.
\end{lem}
\begin{proof}
    Let $a, b\in P\setminus F$. Let $g\in P$ be a value for $F$. Then the two 
    implication filters generated by $F\cup\Set{a}$ and 
    $F\cup\Set{b}$ respectively must contain $g$ -- as they are both 
    subfilters of $P$. Hence 
    $g$ is in $(F\join\leftGen a\rightGen)\cap(F\join\leftGen 
    b\rightGen)= F\join(\leftGen a\rightGen\cap\leftGen 
    b\rightGen)= F\join\leftGen a\join b\rightGen$ and so $a\join 
    b\notin F$. 
\end{proof}

\begin{lem}\label{lem:regIntersectRel}
    Let $H$ be any implication subfilter of $P$. Then $H$ is the intersection 
    of $P$-regular implication filters.
\end{lem}
\begin{proof}
    Let $h\notin H$. Then (standard Zornification) there is a regular 
    filter $F(h)$ such that $H\subseteq F(h)$ and $h\notin F(h)$. 
    $F(h)$ must be maximal wrt not containing $h$ as any filter containing it also contains 
    $H$,  hence $F(h)$ is regular. 
    
    Clearly $H=\bigcap_{h\notin H}F(h)$.
\end{proof}

\begin{lem}\label{lem:meetIrrStdRel}
    Let $F$ be an implication subfilter of $P$. Then TFAE:
    \begin{enumerate}[(a)]
	\item If $A, B$ are two implication subfilters of $P$ with $A\cap 
	B\subseteq F$ then $A\subseteq F$ or $B\subseteq F$; 
	
	\item if $A, B$ are two implication subfilters of $P$ with $F\subsetneq 
	A$ and $F\subsetneq B$ then $F\not=A\cap B$; 
	
	\item $F$ is $P$-prime; 
	
	\item for any $a, b\in P$ if $a\join b=1$ then $a\in 
	F$ or $b\in F$;
	
	\item $P/F$ is linearly ordered;
	
	\item the set of implication subfilters of $P$ that contain $F$ is 
	linearly ordered by inclusion;
	
	\item $F$ is the intersection of a chain of $P$-regular filters. 
    \end{enumerate}
\end{lem}
\begin{proof}
    \begin{enumerate}
	\item[]
	
	\item[1 implies 2] If $F\subsetneq 
	A$ and $F\subsetneq B$ and $F=A\cap B$ then 1 implies 
	$A\subseteq F$ or $B\subseteq F$ -- contradiction.
	
	\item[2 implies 3] If $a, b\in P\setminus F$ then let $A=F\join\leftGen 
	a\rightGen$ and $B=F\join\leftGen b\rightGen$. Then neither 
	$A$ nor $B$ is a subset of $F$ so (by 2) $A\cap B$ is not 
	contained in $F$. But 
	$A\cap B= (F\join\leftGen a\rightGen)\meet(F\join\leftGen 
    b\rightGen)= F\join(\leftGen a\rightGen\meet\leftGen 
    b\rightGen)= F\join\leftGen a\join b\rightGen$ and so $a\meet 
    b\notin F$. 
    
    \item[3 implies 4] is immediate as $1\in F$.
    
    \item[4 implies 5] $(a\to b)\join(b\to a)=1$ for any $a, b\in 
    P$ and $a\to b\in P$, $b\to a\in P$. Thus $a\to b\in F$ or $b\to a\in F$. The former 
    implies $[a]_{F}\le [b]_{F}$ and the latter implies 
    $[b]_{F}\le[a]_{F}$. 
    
    \item[5 implies 6] Suppose 6 fails and $A,B\subseteq P$ are incomparable 
    implication filters containing $F$. Let $a\in A\setminus B$ and 
    $b\in B\setminus A$. Wolog $a\to b\in F$, but this implies 
    $a\meet b= a\otimes(a\to b)\in F$ and so $b\in F\subseteq A$ -- 
    contradiction.
    
    \item[6 implies 7]  $F$ is the 
    intersection of a set of $P$-regular filters that must be a chain by 6. 
    
    \item[7 implies 1] Suppose that $A\cap B\subseteq F$ but $F$ 
    contains neither $A$ nor $B$. Then let $a\in A\setminus F$ and 
    $b\in B\setminus F$ and $\rsf C$ a chain of $P$-regular filters 
    intersecting to $F$. Now $a\join b\in A\cap B\subseteq F$ so that 
    $a\join b\in G$ for all $G\in\rsf C$. $a\notin F$ implies there 
    is $G_{a}\in\rsf C$ with $a\notin G_{a}$, and $G_{b}\in\rsf C$ 
    with $b\notin G_{b}$. As $\rsf C$ is a chain we may as well 
    assume that $G_{a}\subseteq G_{b}$, but then neither $a$ nor $b$ 
    is in $G_{a}$, and so $a\join b\notin G_{a}$ -- contradiction.
%
    \end{enumerate}
\end{proof}

\begin{lem}
    Let $P$ and $Q$ be prime implication filters. Then $P\cap Q$ is 
    $P$-prime. 
\end{lem}
\begin{proof}
    $P\cap Q$ is clearly an implication subfilter of $P$. Let 
    $a,b\in P$ with $a\join b\in P\cap Q$. Then $a\join b\in Q$ and 
    so one of $a$ or $b$ is in $P\cap Q$.
\end{proof}

\begin{lem}
    Let $F\subsetneq P$ be $P$-prime. Then there is a unique prime 
    implication filter $F'$ such that $F=P\cap F'$.
\end{lem}
\begin{proof}
    Let $F'=\Set{x | \forall p\in P\ p\join x\in F}$. 
    
    $F'$ is upwards closed as $x\in F'$ and $x\le y$, $p\in P$ 
    implies $x\join p\le y\join p$ and so $y\join p\in F$. 
    
    $F'$ is meet-closed as if $x,y\in F'$ and $p\in P$ then 
    $(x\meet y)\join p= (x\join p)\meet(y\join p)\in F$.
    
    $F'$ is $\otimes$-power closed as if $x\in F'$ and $p\in P$ then 
    $x^{n}\join p\geq x^{n}\join p^{n}= (x\join p)^{n}\in F$. 
    
    $F'$ is prime as if $a\join b\in F'$ and $p\in P$ then 
    $(a\join b)\join p= (a\join p)\join(b\join p)\in F$ and both 
    $a\join p$ and $b\join p$ are in $P$. Hence one of them is in 
    $F$. Now if $a\notin F'$ and $b\notin F'$ then there are $p,q\in P$ 
    with $a\join p\notin F$ and $b\join q\notin F$. However $b\join 
    p\in F$ and $a\join q\in F$. But now we also have 
    $(a\join b)\join(p\meet q)\in F$ and so one one $a\join(p\meet 
    q)$ or $b\join(p\meet q)\in F$. But this is impossible as 
    $a\join(p\meet q)= (a\join p)\meet(a\join q)\le a\join p\notin F$
    and simlarly for the other term. Contradiction!
    
    $F\subseteq F'$ as $x\in F$ and $p\in P$ implies $x\le x\join p$. 
    
    $F'\cap P= F$ as if $p\in P\setminus F$ then $p\join p\notin F$ 
    and so $p\notin F'$.
    
    If $Q$ is prime and $P\cap Q=F$ then $F'\subseteq Q$. As if $p\in 
    P\setminus F$ then $p\notin Q$. If $x\in F'$ then $x\join p\in 
    F\subseteq Q$ and so $x\in Q$. 
    
    If $Q$ is prime and $P\cap Q=F$ then $Q\subseteq F'$. If $q\in Q$ 
    and $p\in P$ then $p\join q\in P\cap Q=F$ and so $q\in F'$.  
\end{proof}

\begin{lem}
    Let $F\subsetneq P$ be $P$-regular. Then there is a unique regular 
    implication filter $F'$ such that $F=P\cap F'$.
\end{lem}
\begin{proof}
    Let $p\in P$ be a value for $F$. Let $R$ be a maximal implication 
    filter such that $F\subseteq R$ and $p\notin R$. 
    
    Then $R$ is prime as it is regular. 
    
    $R\cap P=F$ as it contains $F$ by definition. If $q\in P\setminus 
    F$ then $F+q$ contains $p$ and so $q\notin R$. 
    
    From the last lemma we know that $R=\Set{x | \forall q\in P\ 
    x\join q\in F}$.
\end{proof}

\begin{lem}
    If $F\subsetneq P$ is a $P$-value for $p\in P$ then $F'$ is a 
    value for $p$.
\end{lem}
\begin{proof}
    Immediate from the proof of the last lemma. 
\end{proof}

Putting all these together we get the following theorem.

\begin{thm}
    Let $P$ be any implication filter. Then there is a bijection 
    between $P$-prime subfilters of $P$ and prime filters that do not 
    contain $P$ given by 
    \begin{align*}
	F&\mapsto F'\\
	\intertext{ with inverse }
	Q&\mapsto Q\cap P.
    \end{align*}
    This function preserves regularity and values. 
\end{thm}

As a special case of this we let 
$\leftGen\ell\rightGen$ be the implication filter generated by 
$[\ell, 1]$. More specifically, for any 
$\ell\in\mathcal L$ we can define the filter $\leftGen\ell\rightGen$
as 
$$
\leftGen\ell\rightGen=\Set{x | \exists n\ x\geq\ell^{n}}.
$$

\begin{prop}\label{prop:relFilt}
    Let $\ell\in\mathcal L$. Then the function 
    \begin{align*}
	\Set{P | P\text{ is a value of }\ell} &\to\Set{F | F\text{ is 
	a
	maximal filter of }\leftGen\ell\rightGen\text{ avoiding 
	}\ell}\\
	\intertext{ given by }
	P & \mapsto P\cap\leftGen\ell\rightGen= \sigma P
    \end{align*}
    is a well-defined bijection.
    The inverse of the mapping is given by 
    $F\mapsto\Set{x | x\join\ell\in F}$.
\end{prop}
\begin{proof}
    We just need to notice that 
    $$
    y\in \Set{x | x\join\ell\in F}\text{ iff }y\in\Set{x | \forall 
    p\in\leftGen\ell\rightGen\ p\join x\in F}.
    $$
    The right to left implication is trivial as 
    $\ell\in\leftGen\ell\rightGen$.
    
    If $x\join\ell\in F$ and $p\in\leftGen\ell\rightGen$ then 
    $p\geq\ell^{n}$ for some $n$ and so 
    $x\join p\geq x\join\ell^{n}\geq x^{n}\join\ell^{n}= 
    (x\join\ell)^{n}\in F$. 
\end{proof}

\begin{defn}
    $a\in P$ is a $P$-counit iff $a<1$ and there is some $b\in P$ 
    with $b<1$ and $a\join b=1$. 
\end{defn}

\begin{lem}
    Let $F$ be a $P$-prime implication filter. Then $F$ contains all 
    $P$-counits 
    iff for all $x\in P\setminus F$ and all $p\in F$ $p\geq x$.
\end{lem}
\begin{proof}
    Suppose that $x\in P\setminus F$ and $y\in F$ with $x\not\le y$. 
    We know that $(x\to y)\join(y\to x)=1$ and $x\to y\in P$, $y\to 
    x\in P$.
    
    As $x\not\le y$ we have $x\to y<1$, and $y\not\le x$ implies 
    $y\to x<1$ and so $y\to x$ is a $P$-counit. 
    
    If it is in $F$ then so is $x\meet y= (y\to x)\otimes y$, 
    contradicting $x\notin F$. 
    Thus $F$ cannot contain 
    all $P$-counits. 
    
    Conversely if $a$ is a $P$-counit and $a\join b=1$ for some $b<1$ 
    in $P$. 
    One of $a$ or $b$ is in $F$ (as $F$ is $P$-prime). If $a\notin F$ 
    then $a\le b$ which is impossible, so $a\in F$.  
\end{proof}

A slight variation of this lets us see that filters are incomparable 
because of counits.

\begin{lem}\label{lem:inCompFilCOU}
    Let $P$ and $Q$ be incomparable implication subfilters of $R$. Then there 
    is an $R$-counit in $Q\setminus P$.
\end{lem}
\begin{proof}
    Suppose not,  ie every $R$-counit in $Q$ is also in $P$. As $P$ and 
    $Q$ are incomparable we can find $x\in Q\setminus P$ and $y\in 
    P\setminus Q$. Thus $x\not\le y$ and $y\not\le x$ and so 
    $x\to y<1$ and $y\to x<1$ and $(x\to y)\join(y\to x)=1$. So
    $y\to x$ is a counit in $Q$ and (by assumption) must be in $P$. 
    As $y\in P$ we now have 
    $x\meet y= y\otimes(y\to x)\in P$ contradicting $x\notin P$.
\end{proof}

\begin{prop}\label{prop:incompUnRel}
    Let $F$ be any $P$-prime implication filter that does not contain all 
    $P$-counits. Then there is a $P$-regular implication filter incomparable 
    to $P$.
\end{prop}
\begin{proof}
    As $F$ does not contain all $P$-counits we know that there is some 
    $g\in P\setminus F$ that is not below $F$, ie there is some $p\in 
    F$ 
    with $p\not\geq g$. Of course $g\not\geq p$. Thus 
    $g\to p<1$ and $p\to g<1$ and $(g\to p)\join(p\to
    g)=1$. 
    
    As $(p\to g)\otimes(p\join g)=g$ we must have $p\to g\notin F$. 
    
    Let $Q$ be a maximal subfilter of $P$ avoiding $g\to p$. Then $Q$ 
    is $P$-regular, 
    hence $P$-prime and as $(g\to p)\join(p\to 
    g)=1\in Q$ we have $p\to g\in Q\setminus P$. 
    By construction $g\to p\in P\setminus Q$ and so these two 
    ideals are incomparable.
\end{proof}

Now, if $N(P)$ is the implication filter generated by the 
$P$-counits, then $N(P)$ is 
$P$-prime as if $a,b\in P\setminus\Set 1$ and $a\join b=1$ then $a$ 
and $b$ are $P$-counits and 
so in $N(P)$.  

All implication subfilters of $P$ that contain $N(P)$ form a chain. Those that are proper 
subsets do not form a chain by the above proposition. It remains to 
show that there are no others. 

\begin{prop}
    If $F$ is a $P$-prime implication filter then either 
    $N(P)\subseteq F$ or 
    $F\subseteq N(P)$.
\end{prop}
\begin{proof}
    If $F$ is not a subset of $N(P)$ then we can find $p\in F\setminus 
    N(P)$. $p\notin N(P)$ implies $p$ is below $N(P)$ and so 
    $N\subseteq[p,1]\subseteq F$. 
\end{proof}

Thus $N(P)$ is the minimal  $P$-prime implication filter comparable to all 
other $P$-prime filters. If we have a minimal $P$-prime implication filter comparable to 
all others then it must contain all $P$-counits -- by 
\propref{prop:incompUnRel} and so $N(P)$ exists and it equals $N(P)$. 

There are special values.
\begin{defn}
    A regular filter $P$ is \emph{special} iff $P$ is the unique value of 
    some $\ell\in\mathcal L$. We may also say that such an $\ell$ is 
    \emph{special}.
\end{defn}

Now we get the desired relativization of the Conrad filter. 
\begin{thm}\label{thm:special}
    Let $\ell<1$ in $\mathcal L$. Then the following are equivalent:
    \begin{enumerate}[i.]
	\item there is a proper implication subfilter $\leftGen\ell\rightGen$ that is 
	$\leftGen\ell\rightGen$-prime and 
	$x\in\leftGen\ell\rightGen\setminus P$ implies $x$ is below 
	$P$;
	
	\item the implication filter generated by the counits in $\leftGen\ell\rightGen$ does 
	not contain $\ell$; 
	
	\item $\ell$ is special in $\leftGen\ell\rightGen$;
	
	\item $\ell$ is special in $\mathcal L$.
    \end{enumerate}
\end{thm}
\begin{proof}
    The equivalence of the last two conditions follows from 
    \propref{prop:relFilt}. 
    
    If (i),   let $P$ be such a subfilter of $\leftGen\ell\rightGen$, 
    then $\leftGen\ell\rightGen/P$ is linearly ordered and contains a 
    unique maximal implication filter $N$ that does not contain 
    $[\ell]_{P}$. Then $\eta_{P}^{-1}[N]\cap\leftGen\ell\rightGen$ is 
    special for $\ell$ in $\leftGen\ell\rightGen$. Thus (iii)
    
    If (iii), let $F$ be the unique $\leftGen\ell\rightGen$-value for 
    $\ell$. Then if $a,b\in \leftGen\ell\rightGen$  have $a\join b=1$ 
    we consider the implication filter $\Set{x | x\join 
    b=1}\cap\leftGen\ell\rightGen$. This 
    avoids $\ell$ and so is contained in $F$. As it contains $a$ we 
    have $a\in F$. Thus all counits are in $F$ and we have (ii).

    If (ii), let $P$ be the implication filter generated by the 
    counits of $\leftGen\ell\rightGen$ intersected with 
    $\leftGen\ell\rightGen$. This is proper and 
    $\leftGen\ell\rightGen$-prime. Let 
    $x\in\leftGen\ell\rightGen\setminus P$ and 
    $p\in P$. Then $(x\to p)\join(p\to x)=1$ and 
    if neither $x\to p$ or $p\to x$ is $1$ then both are in $P$ (as 
    they are counits).
    But then $x\meet p= (p\to x)\otimes p\in P$ implies $x\in P$ -- 
    contradiction. Therefore $p\to x\notin P$ and so $x\to p=1$, ie 
    $x\le p$. Thus (i) holds.
\end{proof}

\begin{bibdiv}
\begin{biblist}
    \DefineName{cgb}{Bailey, Colin G.}
    \DefineName{jso}{Oliveira,  Joseph S.}

\bib{mvpaperOne}{article}{
title={The Prime Filters of an MV-Algebra}, 
author={cgb}, 
status={in preparation},
eprint={arXiv:0907.3328v1 [math.RA]}
}

\bib{LocalizationViaLU}{article}{
title={Localization of MV -algebras and lu-groups},
author={Dumitru Bu\c sneag},
author={Dana Piciu},
journal={Algebra Universalis},
volume={50},
date={2003},
pages={359--380}
}

\bib{ConradLattice}{article}{
title={The lattice of all convex $\ell$-subgroups of a lattice-ordered 
group},
author={Paul Conrad},
journal={Czechoslovak Mathematical Journal},
volume={15},
date={1965},
pages={101--123}
}

\bib{ConradStrThms}{article}{
title={Some Structure Theorems for Lattice-Ordered Groups},
author={Paul Conrad},
journal={Transactions of the American Mathematical Society}, 
volume={99},
date={1961},
pages={212--240}
}

\bib{LocalMVAlg}{article}{
title={Local algebras in the representation of MV-algebras},
author={Di Nola, A.},
author={Esposito, I.},
author={Gerla, B.},
journal={Algebra Universalis},
volume={56},
date={2007},
pages={133--164}
}

\bib{MainBk}{book}{  
author={Mundici, D.}, 
author={Cignoli, R.}, 
author={D'Ottaviano, I.M.},
title={Algebraic Foundations of Many-valued Reasoning}, 
publisher={Kluwer}, 
date={2000}
}
\end{biblist}
\end{bibdiv}

\end{document}